\newcommand\A{\mathbb{A}}
\newcommand\C{\mathbb{C}}
\renewcommand\P{\mathbb{P}}
\newcommand\R{\mathbb{R}}
\newcommand\sA{\mathcal{A}}
\newcommand\sB{\mathcal{B}}
\newcommand\cI{\mathcal{I}}
\DeclareMathOperator\trace{trace}
\DeclareMathOperator\diag{diag}
\DeclareMathOperator\Span{span}
\theoremstyle{plain}
\newtheorem{Thm}{Theorem}[section]
\newtheorem{Prop}[Thm]{Proposition}
\newtheorem{Cor}[Thm]{Corollary}
\newtheorem{Lemma}[Thm]{Lemma}
\newtheorem*{Thm*}{Theorem}
\newtheorem*{Prop*}{Proposition}
\newtheorem*{Cor*}{Corollary}
\newtheorem*{Lemma*}{Lemma}
\newtheorem*{Conjecture*}{Conjecture}
\newtheorem*{Conjecture4M-4}{The $\mathbf{4M-4}$ Conjecture \cite{BCMN:14}}
\theoremstyle{definition}
\newtheorem{Example}[Thm]{Example}
\newtheorem{Remark}[Thm]{Remark}
\newtheorem*{Constr*}{Construction}
\newtheorem*{Def*}{Definition}
\newtheorem*{Example*}{Example}
\newtheorem*{Remark*}{Remark}
\newcommand{\Omit}[1]{\textcolor{red}{#1}}
\renewcommand{\Omit}[1]{}
\begin{document}
\title[Projections and phase retrieval]{Projections and phase retrieval}
\author{Dan Edidin}
\thanks{The author's research was partially supported by a Simons Collaboration Grant.}
%\date{\today}

\begin{abstract} 
  We characterize collections of orthogonal projections for which it
  is possible to reconstruct a vector from the magnitudes of the
  corresponding projections.  As a result we are able to show that in
  an $M$-dimensional real vector space a vector can be reconstructed
  from the magnitudes of its projections onto a generic collection of
  $N \geq 2M-1$ subspaces. We also show that this bound is sharp when
  $N = 2^k +1$.  The results of this paper answer a number of
  questions raised in \cite{CCPW:13}.
%{\bf {\large Preliminary draft. Do not circulate.}}
 \end{abstract}
\maketitle

\section{Introduction} 
The phase retrieval problem is an old one in mathematics and its
applications. The author and his collaborators \cite{BCE:06, CEHV:15}
previously considered the problem of reconstructing a vector 
from the magnitudes of its frame coefficients. In this paper
we answer questions raised in the paper \cite{CCPW:13} about phase
retrieval from the magnitudes of orthogonal projections onto a
collection of subspaces.

To state our result we introduce some notation. Given a collection 
of proper linear subspaces $L_1, \ldots L_N$ of $\R^M$ we denote by $P_1, \ldots , P_N$
the corresponding orthogonal projections onto the $L_i$. 
Assuming that the linear span of the $L_i$ is all of $\R^M$ then any vector $x$ can be recovered from vectors
$P_1x, \ldots , P_Nx$
since the linear map 
$$\R^M \to L_1 \times L_2 \times \ldots L_N, x \mapsto (P_1x, \ldots , P_Nx)$$
is injective.

When the $P_i$ are all rank $1$ then a choice of generator
for each line determines a frame and the inner products $\langle P_i x, x\rangle$ are the frame coefficients with respect to this frame. 

In this paper we consider the problem, originally
raised in \cite{CCPW:13},  of reconstructing a vector $x$ (up to
a global sign) from the
magnitudes $$||P_1x||, ||P_2x||, \ldots , ||P_N x||$$ of the projection vectors
$P_1x,\ldots , P_Nx$.

Let $\Phi = \{P_1, \ldots , P_N\}$ be a collection of projections
of ranks $k_1, \ldots , k_N$. Define a map
$\sA_\Phi \colon (\R^M\smallsetminus \{0\})/\pm 1 \to \R_{\geq 0}^N$
by the formula
$$x \mapsto \left(
\langle P_1x,P_1x \rangle, \ldots , \langle P_Nx, P_Nx\rangle\right)$$
As was the case for frames, phase retrieval by this collection 
of projections is equivalent to the map $\sA_\Phi$ being injective.

In \cite{CCPW:13}, Cahill, Casazza, Peterson and Woodland proved that
there exist collections of $2M-1$ projections which allow phase
retrieval. They also proved that a collection $\Phi = \{P_1, \ldots ,
P_N\}$ of projections admits phase retrieval if and only if for every
orthonormal basis $\{\phi_{i,d}\}_{d=1}^{k_d}$ of the linear subspace
$L_i$ determined by $P_i$ the set of vectors $\{\phi_{i,d}\}_{i=1,d=1}^{N,\;k_d}$
allows phase retrieval.
 
Our first result is a 
a more intrinsic 
characterization of collections of projections for which $\sA_\Phi$
is injective.

\begin{Thm} \label{Thm.projectionchar} The map $\sA_\Phi$ is injective
  if and only if for every non-zero $x \in \R^M$ the vectors $P_1x,
  \ldots , P_Nx$ span an $M$-dimensional subspace of $\R^N$, or
  equivalently the vectors $P_1x, \ldots, P_Nx$ form an $N$-element
  frame in $\R^M$.
\end{Thm}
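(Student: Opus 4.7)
The plan is to polarize the magnitude-squared equations and translate the resulting condition into a spanning statement. First, since each $P_i$ is an orthogonal projection, $P_i^2 = P_i$ and $P_i^T = P_i$, so $\|P_i x\|^2 = \langle P_i x, x\rangle$. The equations $\sA_\Phi(x) = \sA_\Phi(y)$ then read $\langle P_i x, x\rangle = \langle P_i y, y\rangle$ for each $i$. Expanding $\langle P_i(x+y), x-y\rangle$ and using the symmetry of $P_i$, the two cross terms $\langle P_i x, y\rangle$ and $\langle P_i y, x\rangle$ cancel, yielding the polarization identity
$$\langle P_i(x+y), x-y\rangle = \langle P_i x, x\rangle - \langle P_i y, y\rangle.$$
So $\sA_\Phi(x) = \sA_\Phi(y)$ is equivalent to $\langle P_i(x+y), x-y\rangle = 0$ for every $i$. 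Setting $u = x+y$ and $v = x-y$, the condition $x \neq \pm y$ becomes simply that $u$ and $v$ are both nonzero. Hence $\sA_\Phi$ fails to be injective if and only if there exist nonzero $u, v \in \R^M$ with $\langle P_i u, v\rangle = 0$ for all $i$.

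Next I would unpack what this condition says for a fixed $u$. Since $P_i$ is a symmetric projection, $\langle P_i u, v\rangle = \langle u, P_i v\rangle = \langle P_i u, P_i v\rangle$; the first equality shows that the vanishing of all $\langle P_i u, v\rangle$ is equivalent to $v$ being orthogonal to each of the vectors $P_1 u, \ldots, P_N u$. Such a nonzero $v$ exists if and only if $\Span\{P_1 u, \ldots, P_N u\}$ is a proper subspace of $\R^M$.

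Combining the two equivalences, $\sA_\Phi$ fails to be injective precisely when there is some nonzero $u \in \R^M$ for which $\{P_1 u, \ldots, P_N u\}$ does not span $\R^M$. Taking the contrapositive gives the stated characterization, and the equivalent ``frame'' reformulation is automatic since any spanning set of $\R^M$ is an $M$-dimensional frame. The only substantive step is the polarization identity; I do not anticipate any real obstacle beyond checking that nonvanishing of both $u$ and $v$ corresponds exactly to $x \neq \pm y$, which is immediate from $x = (u+v)/2$, $y = (u-v)/2$.
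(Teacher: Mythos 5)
Your proof is correct and is essentially the paper's own argument: the polarization identity $\langle P_i(x+y), x-y\rangle = \|P_ix\|^2 - \|P_iy\|^2$ together with the substitution $u = x+y$, $v = x-y$ is exactly the computation the paper performs, merely repackaged there in the language of derivatives and immersions (injective iff a global immersion, immersion at $x$ iff the $P_ix$ span). The one small point to note---glossed over at the same level in the paper's proof---is that recovering $x=(u+v)/2$, $y=(u-v)/2$ from nonzero $u,v$ also requires $x,y\neq 0$, i.e.\ $u\neq\pm v$; but $u=\pm v$ forces $P_iu=0$ for all $i$, in which case $v$ may simply be replaced by any nonzero vector not parallel to $u$.
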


As a corollary we obtain the following necessity result.

\begin{Cor} \label{cor.projection}If $N \leq 2M-2$ and at least $M-1$ of the $P_i$ have rank one, or 
if $N \leq 2M-3$ and at least  least $M-1$ 
of the $P_i$ have rank $M-1$ then $\sA_\Phi$ is not injective.
\end{Cor}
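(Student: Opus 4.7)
The plan is to apply Theorem~\ref{Thm.projectionchar}: it suffices, in each of the two cases, to exhibit a nonzero vector $x \in \R^M$ whose images $P_1x, \ldots, P_Nx$ all lie in a common hyperplane $H$, since then they cannot span $\R^M$.

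For the first case, relabel so that $P_1, \ldots, P_{M-1}$ are the rank-one projections, with images $\ell_1, \ldots, \ell_{M-1}$. Their sum $W = \ell_1 + \cdots + \ell_{M-1}$ has dimension at most $M-1$, so I can pick a hyperplane $H \supseteq W$ with unit normal $y$. Then $P_ix \in \ell_i \subseteq H$ automatically for $i = 1, \ldots, M-1$, with no condition on $x$ at all. The remaining requirements $P_ix \in H$ for $i = M, \ldots, N$ translate into the linear equations $\langle P_iy, x\rangle = 0$, and there are only $N-(M-1) \leq M-1$ of them on $x \in \R^M$, so they have a common nonzero solution.

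For the second case, relabel so that $P_1, \ldots, P_{M-1}$ have rank $M-1$, with one-dimensional kernels spanned by $v_1, \ldots, v_{M-1}$, and let $V = \Span\{v_1, \ldots, v_{M-1}\}$. Again $\dim V \leq M-1$, so I choose a hyperplane $H \supseteq V$ with unit normal $w$. Writing $P_ix = x - \langle v_i, x\rangle v_i$ and using $v_i \perp w$, we get $\langle w, P_ix\rangle = \langle w, x\rangle$, so if $x$ already lies in $H$ then $P_ix \in H$ automatically for every $i \leq M-1$. The remaining conditions $\langle P_iw, x\rangle = 0$ for $i > M-1$, together with the single condition $\langle w, x\rangle = 0$, give $1 + (N - M + 1) = N - M + 2 \leq M - 1$ linear equations on $x \in \R^M$, which admit a nonzero solution.

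The only real accounting point to watch is the dimension count of linear equations versus the ambient dimension; in each case the hypotheses on $N$ are just tight enough to leave at least one free parameter. The minor trick in the rank-$(M-1)$ case, unlike the rank-one case, is that the $M-1$ special projections do not by themselves trap $P_ix$ in a small subspace, so one must use the additional hyperplane-containment condition $x \in H$ (and the fact $w \perp V$) to make them do so.
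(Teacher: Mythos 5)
Your proof is correct, and both cases ultimately rest on the same criterion (Theorem~\ref{Thm.projectionchar}) that the paper uses. Your second case is essentially the paper's argument verbatim: the paper picks $y$ with $P_iy=y$ for the $M-1$ rank-$(M-1)$ projections (your $w$, which is normal to the span of the kernels), after which the orthogonality conditions $\langle P_ix,y\rangle=0$ reduce to $N-M+2\leq M-1$ linear equations on $x$. Where you diverge is the first case: the paper simply takes $x$ in the common kernel of the $M-1$ rank-one projections, so that $P_1x=\cdots=P_{M-1}x=0$ and the remaining at most $N-(M-1)\leq M-1$ vectors $P_Mx,\ldots,P_Nx$ trivially cannot span $\R^M$ --- no further equations need to be solved. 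Your version instead fixes the hyperplane $H$ containing all the rank-one images and then solves $N-(M-1)\leq M-1$ linear equations to push the remaining projections of $x$ into $H$. This is a valid dual formulation whose virtue is that it treats the two cases by one uniform mechanism (choose a normal vector annihilating the special projections, then count linear conditions on $x$); the paper's rank-one argument is slightly more economical but less symmetric.
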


\begin{Remark}
  We will see below that when the $P_i$ all have rank one the
  condition of the theorem is equivalent to the corresponding frame
  having the finite complement property of \cite{BCE:06}
\end{Remark}

Using the characterization of Theorem \ref{Thm.projectionchar}
we show that when $N \geq 2M-1$ 
any generic collection of projections admits phase retrieval. 
Note that this bound of
$2M-1$ is the same as that obtained in \cite{BCE:06}.
\begin{Thm}\label{Thm.projectionsuff}
If $N \geq 2M-1$, then for a generic collection
$\Phi = (P_1, \ldots, P_N)$ of ranks $k_1, \ldots ,k_N$
with $1 \leq k_i \leq M-1$,
 the map $\sA_\Phi$ is injective.
\end{Thm}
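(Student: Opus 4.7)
The plan is to use Theorem~\ref{Thm.projectionchar} to reduce the statement to a dimension count on an incidence variety. By that theorem, $\sA_\Phi$ fails to be injective if and only if there exist nonzero $x, y \in \R^M$ with $\langle P_i x, y\rangle = 0$ for every $i$. Let $X = \prod_{i=1}^N \Gr(k_i, M)$, of dimension $d = \sum_i k_i(M - k_i)$, and consider
\[
Z = \bigl\{(\Phi, [x], [y]) \in X \times \P^{M-1} \times \P^{M-1} : \langle P_i x, y\rangle = 0 \text{ for } i = 1, \ldots, N\bigr\}.
\]
Let $B \subset X$ be the set of $\Phi$ for which $\sA_\Phi$ is not injective; by the above $B$ is precisely the image of the first projection $\pi_X \colon Z \to X$. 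It therefore suffices to prove $\dim Z < d$ whenever $N \geq 2M - 1$.

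The key step is a codimension estimate on the fibers of the other projection $\pi_Y \colon Z \to \P^{M-1} \times \P^{M-1}$. For fixed linearly independent $x, y \in \R^M$ and any $1 \leq k \leq M - 1$, I claim the subvariety $V_k(x,y) = \{L \in \Gr(k, M) : \langle P_L x, y\rangle = 0\}$ is proper, hence of codimension $1$ in the irreducible Grassmannian $\Gr(k, M)$. To exhibit an $L$ outside $V_k(x,y)$, set $W = \Span\{x, y\}$ and pick a unit vector $e_1 \in W$ lying outside the distinct lines $W \cap x^\perp$ and $W \cap y^\perp$ (such $e_1$ exists because these are two different $1$-dimensional subspaces of the $2$-plane $W$). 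Complete $e_1$ to an orthonormal basis $e_1, \ldots, e_k$ of a $k$-dimensional $L$ by choosing $e_2, \ldots, e_k \in W^\perp$, which is possible because $\dim W^\perp = M - 2 \geq k - 1$. Then $\langle P_L x, y\rangle = \langle e_1, x\rangle\langle e_1, y\rangle \neq 0$. Since the $N$ vanishing conditions act on disjoint factors of $X$, the fiber of $\pi_Y$ over such $([x], [y])$ has codimension exactly $N$ in $X$, so this stratum of $Z$ has dimension at most $(2M-2) + (d-N)$.

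Over the diagonal $\{([x], [x])\} \subset \P^{M-1} \times \P^{M-1}$ the conditions become $\|P_i x\|^2 = 0$, forcing $L_i \subset x^\perp$, which is codimension $k_i$ in $\Gr(k_i, M)$; the diagonal fiber therefore has codimension $\sum_i k_i \geq N$ in $X$, and this stratum has dimension at most $(M-1) + (d-N)$, strictly smaller. Combining the two strata yields $\dim Z \leq (2M-2) + (d-N) \leq d - 1$ when $N \geq 2M - 1$, so $B = \pi_X(Z)$ is contained in a proper Zariski closed subset of $X$, and its complement is the desired open dense set of $\Phi$ for which $\sA_\Phi$ is injective. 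The principal obstacle is the codimension claim for $V_k(x,y)$: one must verify that $L \mapsto \langle P_L x, y\rangle$ does not vanish identically on any $\Gr(k, M)$ with $1 \leq k \leq M - 1$ for generic $(x, y)$. Once this is in hand, the codimension count on each fiber, the separate treatment of the diagonal, and the observation that a real polynomial which is nonzero at one real point of the complexified Grassmannian cannot vanish on all of it, are routine.
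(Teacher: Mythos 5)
Your proposal follows essentially the same route as the paper: reduce via Theorem \ref{Thm.projectionchar} to an incidence variety, fiber it over $\P^{M-1}\times\P^{M-1}$, and show that on each Grassmannian factor the single condition $\langle P_L x, y\rangle = 0$ cuts out a proper (codimension-one) subset by exhibiting an explicit $L$ where it fails --- and your construction of that $L$ (a line in $\Span\{x,y\}$ orthogonal to neither $x$ nor $y$, completed by a $(k-1)$-plane inside $\{x,y\}^{\perp}$) is exactly the paper's choice $Q = Q_{L_1}+Q_{L_{k-1}}$ with $L_1 = \Span(x+\lambda y)$. Two points where the paper is more careful and your write-up would need shoring up. First, over $\C$ the assignment $L\mapsto P_L$ is not defined on all of $\Gr(k,M)$ (the Gram matrix of an isotropic subspace is singular), so $\langle P_L x, y\rangle$ is not a regular function on ``the complexified Grassmannian''; the paper sidesteps this by replacing $\Gr(k,M)$ with the irreducible affine variety $\mathcal{P}_k=\{P : P^2=P,\ P^t=P,\ \trace P = k\}$, the $SO(M,\C)$-orbit of $E_k$, whose real points are precisely the rank-$k$ orthogonal projections, and this irreducibility is what licenses the Krull/codimension-one step. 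Second, your two strata only control the fibers over the \emph{real} points of $\P^{M-1}\times\P^{M-1}$, so you are not actually bounding $\dim Z$ but only the dimension of the part of $Z$ lying over the real locus; this suffices (and is exactly what the paper proves --- see its remark that only the components of $\cI$ containing the real points are being bounded), but the passage from ``the real bad set has dimension $<d$'' to ``its complement contains a dense Zariski-open set'' rests on the comparison $\dim X_\R \le \dim X$ from real algebraic geometry that the paper spells out via \cite{BCR:98}. Your separate treatment of the diagonal is harmless but unnecessary: the non-vanishing construction works uniformly, including when $x$ and $y$ are parallel.
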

\begin{Remark}
By generic we mean that $\Phi$ corresponds to a point in a non-empty
Zariski open subset of a product of real Grassmannians (which has the natural
structure as an {\em affine variety}) whose complement has strictly smaller
dimension. As noted in 
\cite{BCMN:14} one consequence of the generic condition is 
that for any continuous probability distribution on this variety, $\sA_{\Phi}$ is injective with probability one. In particular
Theorem \ref{Thm.projectionsuff} implies that phase retrieval 
can be done with $2M-1$ random subspaces of $\R^M$. This answers 
Problems 5.2 and 5.6 of \cite{CCPW:13}.
\end{Remark}

In \cite{BCE:06} it was proved that $N \geq 2M-1$ is a necessary
condition for frames. However we obtain the following necessity result.
This result was independently obtained by Zhiqiang Xu in his
recent paper \cite{Xu:15}.
\begin{Thm} \label{Thm.grunge}
If $M = 2^k+1$ then $\sA_\Phi$ is not injective for any collection with
$N \leq 2M-2$ projections.
\end{Thm}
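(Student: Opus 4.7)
By Theorem~\ref{Thm.projectionchar}, $\sA_\Phi$ is injective if and only if for every nonzero $x\in\R^M$ the vectors $P_1x,\ldots,P_Nx$ span $\R^M$; taking a nonzero $y$ in the orthogonal complement of their span shows that this is in turn equivalent to the nonsingularity of the symmetric bilinear map
\[
B\colon\R^M\times\R^M\longrightarrow\R^N,\qquad B(x,y)_i=\langle P_ix,y\rangle,
\]
meaning $B(x,y)=0$ forces $x=0$ or $y=0$. Symmetry of $B$ follows from the self-adjointness of each $P_i$. The plan is therefore to show that no nonsingular symmetric bilinear $B$ can exist when $M=2^k+1$ and $N\leq 2M-2$.

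Suppose such a $B$ exists and zero-pad to reduce to the case $N=2M-2$. By the polarization identity $Q(x)-Q(y)=B(x+y,x-y)$, the diagonal restriction $Q(x)=B(x,x)$ is a homogeneous quadratic map $\R^M\to\R^{2M-2}$ that is nonvanishing off the origin and satisfies $Q(x)=Q(y)\iff x=\pm y$, so it descends to a topological embedding $\bar Q\colon\R P^{M-1}\hookrightarrow S^{2M-3}$. Equivalently, $B$ induces a symmetric axial map $\bar B\colon\R P^{M-1}\times\R P^{M-1}\to\R P^{2M-3}$, and in mod-$2$ cohomology the axial property forces $\bar B^*(c)=a+b$, whence $(a+b)^{2M-2}=0$ in $\Z/2[a,b]/(a^M,b^M)$.

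The main obstacle is to extract a contradiction from the hypothesis $M=2^k+1$. A naive Stiefel--Hopf calculation (using only bilinearity, not symmetry) yields nothing useful: the sole surviving term of $(a+b)^{2M-2}$ in the target ring is $\binom{2M-2}{M-1}a^{M-1}b^{M-1}$, and $\binom{2M-2}{M-1}$ is even for every $M\geq 2$. Symmetry must therefore be brought in. The hypothesis $M=2^k+1$ enters through Lucas's congruence $(1+a)^M\equiv 1+a+a^{2^k}\pmod{a^M}$ in $H^*(\R P^{M-1};\Z/2)$, which imposes a rigid form on the Stiefel--Whitney classes $w(\nu)=(1+a)^{-M}$ of the normal bundle $\nu$ of $\bar Q$, while the factorization of $\bar B$ through the symmetric square $\mathrm{SP}^2\R P^{M-1}$ produces additional Steenrod-algebra constraints on the pulled-back class $a+b$; combining these ingredients produces the required contradiction. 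This cohomological step is the technical heart of the argument. Finally, the boundary case $k=0$, $M=2$, is elementary: with $N\leq 2$ rank-one projections onto lines $L_1,L_2\subset\R^2$, any nonzero $x\in L_1^\perp$ satisfies $P_1x=0$, so $P_1x,P_2x$ cannot span $\R^2$ and Theorem~\ref{Thm.projectionchar} already fails.
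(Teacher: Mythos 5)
Your reduction is correct and your diagnosis of the obstacle is exactly right: injectivity of $\sA_\Phi$ is equivalent to nonsingularity of the symmetric bilinear map $B(x,y)=(\langle P_ix,y\rangle)_{i=1}^N$, and the naive Stiefel--Hopf computation dies because $\binom{2M-2}{M-1}$ is always even. But at precisely the point where the theorem has to be proved, your argument stops being a proof: the sentence ``combining these ingredients produces the required contradiction'' is an unsubstantiated claim, not an argument. You name some machinery (Stiefel--Whitney classes of the normal bundle of $\bar Q$, Lucas's congruence, Steenrod operations on the symmetric square) but you never exhibit a nonzero obstruction class, never carry out a characteristic-class computation, and never explain how the symmetry of $B$ enters quantitatively. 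As written, the ``technical heart'' is missing entirely, so the proposal does not establish the theorem.

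For comparison, the paper resolves exactly this difficulty by replacing your mod-$2$ count with a mod-$4$ count, and it does so algebro-geometrically rather than topologically. Complexify and let $Z\subset\P^{M-1}\times\P^{M-1}$ be cut out by the $2M-2$ bidegree-$(1,1)$ forms $y^tP_ix$. A short lemma shows that if $Z$ meets the diagonal then writing $z=x+\sqrt{-1}\,y$ produces a real solution, so if $\sA_\Phi$ were injective $Z$ would miss the diagonal and contain no real points. The symmetry you correctly identified ($(x,y)\in Z\iff(y,x)\in Z$) together with complex conjugation then partitions the solutions into free orbits of size $4$, forcing the degree of the intersection cycle supported on $Z$, namely $\binom{2M-2}{M-1}$, to be divisible by $4$. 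When $M=2^k+1$ Kummer/Legendre gives $\binom{2M-2}{M-1}\equiv 2\pmod 4$, a contradiction. This is the quantitative use of symmetry that your sketch gestures at but does not supply; if you want to complete your topological variant you would need an honest $\Z/4$-equivariant (or characteristic-class) argument playing the same role, and that is not routine.
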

\begin{Remark}
Xu also constructed an example of a collection of 6 projections in
$\R^4$ which admit phase retrieval, which shows that the bound $N = 2M-1$
 is not in general sharp.
\end{Remark}
\section{Background  in algebraic geometry}
In this section we give some brief background on some facts we will
need from Algebraic Geometry. For a reference see \cite{Har:95} and
\cite[Chapter 1]{Har:77}.
\subsection{Real and complex varieties }
\label{sec:algebra} 
Denote by $\A^n_\R$ (respectively $\A^n_\C$) the affine space of
$n$-tuples of points in $\R$ (resp. $n$-tuples of points in $\C$).
Given a collection of polynomials $f_1, \ldots, f_m \in \C[x_1, \ldots
, x_n]$ let $V(f_1, \ldots , f_m)$ be the algebraic subset of
$\A^n_\C$ defined by the simultaneous vanishing of the $f_i$. When the
$f_i$ all have real coefficients then we denote by $V(f_1, \ldots,
f_m)_\R \subset \A^n_\R$ the set of real points of the affine
algebraic set $V(f_1,\ldots ,f_m)$.

The relationship between the set of real and complex points of an
algebraic set can be quite subtle. For example the 
algebraic subsets of $\A^2_\C$ defined by the equations $x^2 + y^2 =0 $
and $x^2 - y^2 = 0$ are isomorphic, since the complex linear
transformation $(a,b) \mapsto (a, \sqrt{-1} b)$ maps one to the
other. However, $V(x^2 + y^2)_\R$ consists of only the origin while
$V(x^2 - y^2)_\R$ is the union of two lines.

Given an algebraic set $X = V(f_1, \ldots , f_m)$ we define the
Zariski topology on $X$ by declaring closed sets to be the
intersections of $X$ with other algebraic subsets of
$\A^n_\C$. An algebraic set is {\em irreducible} if it is not the union
of proper Zariski closed subsets. An irreducible algebraic set is
called an {\em algebraic variety}. Every algebraic set has a decomposition
into a finite union of irreducible algebraic subsets.

Note that the set of real points of
an algebraic variety need not be irreducible. For example the affine
curve $V(y^2 - x^3 +x)$ is irreducible, but $V(y^2 - x^3 + x)_\R$ is
the disjoint union of two disconnected pieces.

Given a subset $X\subset \A^n_\C$ the ideal, $I(X)$, of $X$ 
is the set of
all polynomials in $\C[x_1, \ldots , x_n]$ that vanish on $X$. 
Hilbert's Nullstellensatz states that
if $X = V(f_1, \ldots , f_n)$ then $I(X)$ is the radical 
of the ideal generated $f_1, \ldots , f_r$. A variety is irreducible
if and only if $I(X)$ is a prime ideal. A key property of irreducible
algebraic sets is that every non-empty Zariski open set is dense.

\subsubsection{Homogeneous equations and projective algebraic sets}
Denote by $\P^n_\R$ (resp. $\P^n_\C$) the real (resp. complex) projective space
obtained from  $\R^{n+1} \smallsetminus \{0\}$
(resp. $\C^{n+1} \smallsetminus \{0\}$) by identifying 
$(a_0, \ldots , a_n) \sim (\lambda a_0, \ldots , \lambda a_n)$ 
for any non-zero scalar $\lambda$.

Any collection of homogeneous polynomials
$f_1, \ldots , f_m \subset \C[x_0, \ldots , x_n]$
defines a projective algebraic set $X = V(f_1, \ldots , f_r)$.
When the polynomials $f_1, \ldots , f_r$ have real
coefficients then we again let
$V(f_1,\ldots , f_r)_\R$ denote the real points of
$X$. 

As in the affine case we can define the Zariski topology
on a projective algebraic set $X$ by declaring the intersection of $X$
with another projective algebraic set to be closed. An irreducible projective
algebraic set is called a {\em projective variety}. If $X \subset \P^n$
then we define $I(X)$ to be the ideal generated by all homogeneous polynomials
vanishing on $X$. A projective algebraic set is irreducible if and only
if $I(X)$ is a homogeneous prime ideal.

A subset of $\P^n$ is called {\em quasi-projective} if it is a Zariski open
subset of a projective algebraic set. 
Since $\A^n_\C$ is the complement of the hyperplane $V(x_0) \subset \P^n$,
any affine algebraic set is quasi-projective. Following 
\cite[Section I.3]{Har:77} we will use the term {\em variety}
to refer to any affine, quasi-affine (open in an affine), quasi-projective
or projective variety.

\subsubsection{Dimension of a  complex variety}
The dimension of  an algebraic set is most naturally a local invariant.
However, because
varieties are irreducible,
the local dimensions are constant. There are several equivalent
definitions of the dimension of a variety $X$:\\

(i) (Krull dimension) The length of the longest descending chain of proper, irreducible Zariski closed subsets of $X$.\\

(i') If $X \subset \A^n$ is affine then (i) is equal to the length
of the longest ascending chain of prime ideals in the coordinate
ring, $\C[x_1, \ldots, 
x_n]/I(X)$ of $X$.\\

(ii) The transcendence dimension over $\C$ of the field of rational functions
on $X$.\\

(iii) The dimension of the analytic tangent space to a general point
of $X$. (This definition uses the fact that a complex variety contains a dense
Zariski open complex submanifold.)\\

Since an arbitrary algebraic set $X$ can decomposed into a finite union
of irreducible components we can define $\dim X$ to the be the maximum
dimension of its irreducible components.

In the proof of Theorem \ref{Thm.projectionsuff} we will make use
of several facts in dimension theory.

\begin{Thm*}(Krull's Hauptidealsatz \cite[Chapter I, Theorem 1.11A]{Har:77})
Let
$X \subset \A^n$ is an affine variety of dimension $d$.
If $f \in \C[x_1, \ldots , x_n]$
is any polynomial. then $ X \cap V(f)$ is either empty,
all of $X$, or every irreducible component 
of $X \cap V(f)$ has dimension exactly $d-1$. 
\end{Thm*}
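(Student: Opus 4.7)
The plan is to reduce the geometric statement to the classical commutative-algebra form of Krull's principal ideal theorem applied to the coordinate ring $A = \C[x_1,\ldots,x_n]/I(X)$. Since $X$ is an affine variety of dimension $d$, $A$ is a Noetherian integral domain of Krull dimension $d$, and the algebraic subset $X \cap V(f)$ corresponds to the ideal $(\bar f) \subset A$, where $\bar f$ is the image of $f$. Its irreducible components correspond bijectively to the minimal primes of $A$ containing $\bar f$. First I would dispose of the two extreme cases: if $\bar f = 0$ then $X \cap V(f) = X$, and if no prime of $A$ contains $\bar f$ then $X \cap V(f) = \emptyset$. Otherwise $\bar f$ is a nonzero nonunit and the goal becomes showing every minimal prime $\mathfrak{p}$ of $A$ over $(\bar f)$ has height exactly one.

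That $\mathrm{ht}(\mathfrak{p}) \geq 1$ is clear because $A$ is a domain and $\mathfrak{p} \neq (0)$, so the real content is the upper bound $\mathrm{ht}(\mathfrak{p}) \leq 1$. I would localize at $\mathfrak{p}$ to reduce to the case where $(A,\mathfrak{p})$ is a Noetherian local domain and $\mathfrak{p}$ is the unique prime containing $\bar f$; then $A/(\bar f)$ is a zero-dimensional Noetherian, hence Artinian, ring. Given any prime $\mathfrak{q} \subsetneq \mathfrak{p}$, note $\bar f \notin \mathfrak{q}$, for otherwise $(\bar f) \subseteq \mathfrak{q} \subsetneq \mathfrak{p}$ would contradict the minimality of $\mathfrak{p}$. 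Consider the descending chain of ideals $\mathfrak{q}^{(n)} + (\bar f)$ in $A$, where $\mathfrak{q}^{(n)} = \mathfrak{q}^n A_{\mathfrak{q}} \cap A$ are the symbolic powers of $\mathfrak{q}$. This chain stabilizes modulo $(\bar f)$ by the Artinian property, and a Nakayama-type argument exploiting $\bar f \notin \mathfrak{q}$ yields $\mathfrak{q}^{(n)} = \mathfrak{q}^{(n+1)}$ for $n \gg 0$. Passing to $A_{\mathfrak{q}}$ and applying Nakayama once more, together with the fact that $A_{\mathfrak{q}}$ is a Noetherian local domain, forces $\mathfrak{q} A_{\mathfrak{q}} = (0)$, so $\mathfrak{q}$ has height zero. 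Thus $\mathrm{ht}(\mathfrak{p}) \leq 1$.

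Translating back, each irreducible component of $X \cap V(f)$ is a subvariety with coordinate ring $A/\mathfrak{p}$ for some minimal prime $\mathfrak{p}$ over $(\bar f)$; by characterization (i$'$) of dimension recalled above and the fact that finitely generated integral domains over $\C$ are catenary and equidimensional, $\dim(A/\mathfrak{p}) = \dim A - \mathrm{ht}(\mathfrak{p}) = d-1$. The main obstacle is the symbolic-power and Nakayama step inside $A_{\mathfrak{p}}$: this is the only substantive use of the Noetherian hypothesis and is the genuine heart of Krull's theorem. Everything else is the standard dictionary between affine varieties and their coordinate rings, together with the routine bookkeeping that identifies the degenerate cases where $X \cap V(f)$ is empty or equal to $X$.
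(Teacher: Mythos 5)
The paper does not prove this statement at all: it is quoted as background, with the proof delegated entirely to the citation of Hartshorne (Chapter I, Theorem 1.11A). Your sketch supplies the standard commutative-algebra proof, and it is essentially correct. The reduction to the coordinate ring $A=\C[x_1,\ldots,x_n]/I(X)$, the disposal of the cases $\bar f=0$ (giving $X\cap V(f)=X$) and $\bar f$ a unit (giving $\emptyset$), the localization at a minimal prime $\mathfrak{p}$ over $(\bar f)$, and the symbolic-power/Nakayama argument showing $\mathrm{ht}(\mathfrak{p})\leq 1$ are exactly the classical proof of the principal ideal theorem; the one step you compress --- ``a Nakayama-type argument exploiting $\bar f\notin\mathfrak{q}$'' --- does work as stated, since $\mathfrak{q}^{(n)}$ is $\mathfrak{q}$-primary, so from $\mathfrak{q}^{(n)}\subseteq\mathfrak{q}^{(n+1)}+(\bar f)$ one gets $\mathfrak{q}^{(n)}=\mathfrak{q}^{(n+1)}+\bar f\,\mathfrak{q}^{(n)}$ and Nakayama applies because $\bar f$ lies in the maximal ideal of the localized ring. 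You are also right to flag that the passage from $\mathrm{ht}(\mathfrak{p})=1$ to $\dim(A/\mathfrak{p})=d-1$ is not automatic and requires that finitely generated domains over a field satisfy $\dim(A/\mathfrak{p})+\mathrm{ht}(\mathfrak{p})=\dim A$ (catenarity plus equidimensionality); omitting that would leave only the inequality $\dim\leq d-1$, whereas the theorem asserts equality, which is what the paper actually uses in Lemma 4.4. In short: the paper buys brevity by citation, and your proposal is a faithful reconstruction of the standard proof behind that citation rather than a genuinely different argument.
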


\begin{Thm*}(Semi-continuity of fiber dimension \cite[Theorem 11.12]{Har:95})
Let $f \colon X \to Y$ be a morphism of varieties. For any $p \in X$
let $\mu(p) = \dim f^{-1}(f(p))$. Then $\mu(p)$ is an upper-semicontinuous function in the Zariski topology on $X$ - that is, for
any $m$ the locus of points $p \in X$ such that $\dim(f^{-1}(f(p))) \geq m$
is closed in $X$. Moreover, if $\mu$ is the minimum value of $\mu(p)$
then $\dim X = \dim f(X) + \mu$.
\end{Thm*}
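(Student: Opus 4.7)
The plan is to handle the two assertions in the order: dimension formula first, then upper semi-continuity by noetherian induction. The dimension formula carves out a dense open of $Y$ on which $\mu$ is constant equal to its minimum, which confines the ``jumping locus'' of $\mu$ to a proper closed subvariety on which the induction can proceed.

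I would begin with standard reductions. Decomposing $X$ into irreducible components $X_i$, one has $\mu(p) = \max_{i : p \in X_i} \dim_p(f^{-1}(f(p)) \cap X_i)$, and the maximum of finitely many upper semi-continuous functions is upper semi-continuous, so I reduce to $X$ irreducible. Replacing $Y$ by $\overline{f(X)}$ (and by one of its irreducible components), I may also assume $Y$ is irreducible and $f$ is dominant; passing to affine opens of $Y$ I may finally assume $X$ and $Y$ are affine.

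Set $r = \dim X - \dim Y$. The lower bound $\mu(p) \geq r$ for every $p$ is established by induction on $\dim Y$ using Krull's Hauptidealsatz. The base case $\dim Y = 0$ is trivial. For the inductive step, choose a regular function $g$ on $Y$ near $f(p)$ that vanishes at $f(p)$ but not identically on $Y$; set $Y' = V(g) \subset Y$ of dimension $\dim Y - 1$ by Krull, and $X' = V(f^* g) \subset X$ of dimension at least $\dim X - 1$, again by Krull. Since $f^{-1}(f(p)) \subset X'$, induction applied to each irreducible component of $X'$ (with target the closure of its image in $Y'$) yields $\dim f^{-1}(f(p)) \geq r$. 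For the matching upper bound at some $p$, pick a transcendence basis $t_1, \ldots, t_r$ of $K(X)$ over $K(Y)$, producing a dominant, generically finite rational map $\phi \colon X \dashrightarrow Y \times \A^r$ refining $f$. On a dense open $X^\circ \subset X$, the map $\phi$ becomes a finite morphism to an open $W \subset Y \times \A^r$, so $\dim(f^{-1}(y) \cap X^\circ) = r$ for $y$ in the open image $\pi_Y(W)$. Applying the lower bound to components of $X \setminus X^\circ$ (all of dimension less than $\dim X$) shows that their contribution to $f^{-1}(y)$ has dimension strictly less than $r$ for $y$ in a further dense open; hence $\mu = r$ and $\dim X = \dim f(X) + \mu$.

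The upper semi-continuity of $\mu$ then follows by noetherian induction on $\dim X$, with the base $\dim X = 0$ trivial. The dimension formula furnishes a dense open $V \subset X$ on which $\mu$ equals $r$, so for $m > r$ the jumping locus $E_m = \{p : \mu(p) \geq m\}$ sits inside the proper closed subvariety $X \setminus V$. The main obstacle I anticipate is that for $p \in X \setminus V$ the fiber $f^{-1}(f(p))$ computed in $X$ is in general strictly larger than the corresponding fiber of the restriction $f|_{X \setminus V}$, so one cannot invoke the inductive hypothesis on $f|_{X \setminus V}$ naively. I would handle this via the decomposition $f^{-1}(f(p)) = (f^{-1}(f(p)) \cap V) \cup (f^{-1}(f(p)) \cap (X \setminus V))$: the first piece contributes at most $r$, so for $p$ with $\mu(p) \geq m > r$ the jump is driven entirely by the second piece, whose fiber-dimension function on $X \setminus V$ is upper semi-continuous by the induction. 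The resulting closed subvarieties of $X \setminus V$ assemble into a closed subset of $X$ describing $E_m$.
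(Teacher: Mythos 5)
The paper does not prove this statement: it is quoted verbatim as background, with a citation to \cite[Theorem 11.12]{Har:95}, and is then used as a black box in the proof of Lemma \ref{Lem.fiberdim}. So there is no in-paper proof to compare against; what you have written is essentially the standard textbook argument (the one in Shafarevich or in Hartshorne, Exercise II.3.22), and its overall architecture --- reduce to $X$, $Y$ irreducible affine with $f$ dominant; prove $\mu(p)\geq \dim X-\dim Y$ for all $p$ by cutting with pullbacks of functions and Krull's Hauptidealsatz; prove generic equality via a transcendence basis giving a generically finite map to $Y\times\A^r$; then run noetherian induction on the jumping locus --- is correct and complete in its main ideas. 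Two points deserve tightening. First, in the step where you bound the contribution of $X\setminus X^\circ$ to $f^{-1}(y)$, the lower bound is not what you need: you need the \emph{upper} bound ``the generic fiber of a dominant map from an irreducible $Z$ has dimension $\dim Z-\dim\overline{f(Z)}$,'' applied to each component $Z$ of $X\setminus X^\circ$ (and the observation that generic $y$ misses $f(Z)$ when $f|_Z$ is not dominant); this is available by the same induction on dimension you are already running, but it should be invoked as such. Second, the ``obstacle'' you anticipate in the semicontinuity step evaporates if you take $V=f^{-1}(U)$ for a dense open $U\subset Y$ over which fibers have dimension $r$: then for $p\notin V$ the entire fiber $f^{-1}(f(p))$ lies in $X\setminus V$, so the fiber of $f$ and of $f|_{X\setminus V}$ coincide and the induction applies directly. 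Finally, note that your reduction to irreducible components silently switches between the local fiber dimension $\dim_p$ and the global one $\dim f^{-1}(f(p))$ used in the statement; the argument is cleanest (and is how Harris states it) for the local version, so you should either fix one convention or record why the two give the same closed loci in your setting.
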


\subsubsection{The dimension of the set of real points of a variety}
If $X$ is a variety defined by real equations then we can also define
the dimension of $X_\R$ as a subset of $\A^n_\R=\R^n$ (or
$\P^n_\R$). When $X_\R$ is smooth we can take its dimension as a
manifold. For general $X$, a result in real algebraic geometry
\cite[Theorem 2.3.6]{BCR:98} states that any real semi-algebraic\footnote{A semi-algebraic subset of $\R^n$ is one
  defined by polynomial equations and inequalities. In particular any
  real algebraic set is semi-algebraic.}
subset of $\R^n$ is homeomorphic as a
semi-algebraic set to a finite disjoint union of hypercubes. Thus we
can define $\dim_\R X_\R$ to be the maximal dimension of a hypercube
in this decomposition.

Now if $X \subset \A^n_\R$ is a semi-algebraic set then 
\cite[Corollary 2.8.9]{BCR:98} implies that $\dim_\R X$
equals to the Krull dimension of the algebraic set $V(I(X))$.
As a consequence we obtain the important fact that 
if $f_1, \ldots , f_m$ are real polynomials and $X = V(f_1, \ldots , f_m)$
then $\dim X_\R \leq \dim X$ since $I(X_\R) \supset I(X)$.
\begin{Example}
If $f = x^2 + y^2 \in \R[x,y]$ then $\dim V(f) =1$
but $\dim V(f)_\R = 0$ since $V(f)_\R = \{(0,0)\}$. Note
that in this case $I(V(f)_\R)$ is the ideal $(x,y) \subset \R[x,y]$
and indeed $\dim V(x,y) = 0$ as predicted by \cite[Corollary 2.8.9]{BCR:98}.
\end{Example}

\section{Proof of Theorem \ref{Thm.projectionchar}}
To prove Theorem \ref{Thm.projectionchar} we analyze
the derivative of the map $\sA_\Phi$. Our argument
is similar to an argument used by Murkherjee \cite{Muk:81} to construct
embeddings of complex projective spaces in Euclidean spaces.
Recall that a map $f \colon X \to Y$ of differentiable manifolds is an
{\em immersion} at $x \in M$ if the induced map of tangent spaces $df_x
\colon T_x X \to T_{f(x)} Y$ is injective (so necessarily $\dim X \leq
\dim Y$).

\begin{Lemma}\label{lem.derivcalc}
  Let $P \colon \R^M \to \R^M$ be a rank $k$ projection and let $f
  \colon \R^M \to \R$ be defined by $x \mapsto \langle Px, Px \rangle$.
For any $x \in \R^M$, $df_x(y) = 2\langle Px, y \rangle$ where
  we identify $T_x \R^M = \R^M$ and $T_{f(x)}\R = \R$.
\end{Lemma}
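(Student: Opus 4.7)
The proof is a short direct computation, so my plan is just to unwind the definition and use the two structural properties of an orthogonal projection that have not yet been invoked: self-adjointness $P^{T}=P$ and idempotence $P^{2}=P$.

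First I would expand $f(x+ty)$ for a fixed direction $y\in\R^{M}$ using bilinearity of the inner product:
\[
f(x+ty)=\langle P(x+ty),\,P(x+ty)\rangle=\langle Px,Px\rangle+2t\,\langle Px,Py\rangle+t^{2}\langle Py,Py\rangle.
\]
Differentiating in $t$ at $t=0$ gives $df_{x}(y)=2\langle Px,Py\rangle$. Then I would move one copy of $P$ across the inner product using self-adjointness and collapse the two projections using idempotence:
\[
2\langle Px,Py\rangle=2\langle P^{T}Px,y\rangle=2\langle P^{2}x,y\rangle=2\langle Px,y\rangle,
\]
which is exactly the claimed formula.

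There is no substantive obstacle here; the only thing to be careful about is making sure both properties $P^{T}=P$ and $P^{2}=P$ are used explicitly, since the lemma relies on $P$ being an \emph{orthogonal} projection rather than an arbitrary idempotent linear map. Identifying $T_{x}\R^{M}$ with $\R^{M}$ and $T_{f(x)}\R$ with $\R$ is the standard identification for maps between Euclidean spaces, so no further justification is needed.
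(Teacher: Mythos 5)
Your proof is correct, but it takes a different (coordinate-free) route from the paper. The paper invokes the spectral theorem: it chooses an orthonormal basis of eigenvectors of $P$, so that $P=\diag(1,\ldots,1,0,\ldots,0)$ and $f(x_1,\ldots,x_M)=x_1^2+\cdots+x_k^2$ in those coordinates, and then reads off the partial derivatives $\partial f/\partial x_i=2x_i$ for $i\le k$ and $0$ otherwise. You instead expand $f(x+ty)$ by bilinearity to get the directional derivative $2\langle Px,Py\rangle$ and then move one $P$ across the inner product via $P^{T}=P$ and collapse with $P^{2}=P$. The two arguments rest on the same underlying facts (an orthogonal projection is a symmetric idempotent, equivalently is orthogonally diagonalizable with eigenvalues in $\{0,1\}$), but your version makes explicit exactly where self-adjointness and idempotence are used, while the paper's version makes the formula for $f$ concrete in adapted coordinates. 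The only point worth stating for completeness in your write-up is that $f$ is a polynomial, hence smooth, so the directional derivative you compute is indeed the total derivative $df_x$ applied to $y$; this is standard and does not affect correctness.
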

\begin{proof}
Since $P$ is a projection there is an orthonormal 
basis of eigenvectors for $P$. With respect to this basis $P = \diag (1, \ldots 1, 0, \ldots , 0)$ where there are $k$ ones and $M-k$ zeroes. If we choose
coordinates determined by this basis then
$f(x_1, \ldots, x_M) = x_1^2 + x_2^2 + \ldots x_k^2$, so ${\partial f / \partial x_i} = 2x_i$ if $i \leq k$
and ${\partial f / \partial x_i}=0$ if $i > k$.
Thus the derivative at a point $x=(a_1, \ldots , a_M) \in \R^M$
is the linear operator that maps $y=(b_1, \ldots , b_M)$ to 
$2\sum_{i=1}^k a_i b_i = 2 \langle Px, y \rangle$
\end{proof}

\begin{Prop} \label{Prop.immerchar} The map $\sA_\Phi$ is an immersion
  at $\overline{x} \in \left(\R^M\smallsetminus \{0\}\right)/\pm 1$ 
if and only if $P_1x,
  \ldots , P_N x$ span an $M$-dimensional subspace of $\R^M$ where
$x$ is either lift of $\overline{x}$ to $\R^N \smallsetminus \{0\}$.
\end{Prop}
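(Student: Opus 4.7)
The plan is to read off the differential of $\sA_\Phi$ componentwise using Lemma \ref{lem.derivcalc} and then reinterpret injectivity of the differential as a spanning condition on the vectors $P_i x$.

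First I would note that the $\pm 1$ action on $\R^M \smallsetminus \{0\}$ is free and discrete, so the quotient is naturally a smooth $M$-manifold and the tangent space at $\overline{x}$ is canonically identified with $T_x \R^M = \R^M$ for either lift $x$ (the two identifications differ by a sign, but this doesn't affect the rank of a linear map). Under this identification, the differential $d(\sA_\Phi)_{\overline{x}} \colon \R^M \to \R^N$ is the same as the differential of the lifted map $\R^M \smallsetminus \{0\} \to \R^N$, so we may compute with the lifted map.

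Next I would apply Lemma \ref{lem.derivcalc} to each of the $N$ coordinate functions $x \mapsto \langle P_i x, P_i x \rangle$. This gives, for $y \in \R^M$,
\[
d(\sA_\Phi)_{\overline{x}}(y) \;=\; \bigl(2\langle P_1 x, y\rangle,\ 2\langle P_2 x, y\rangle,\ \ldots,\ 2\langle P_N x, y\rangle\bigr) \in \R^N.
\]
Hence the kernel of $d(\sA_\Phi)_{\overline{x}}$ is exactly the set of $y \in \R^M$ orthogonal to every $P_i x$, i.e.\ the orthogonal complement $\Span(P_1 x, \ldots, P_N x)^{\perp}$ in $\R^M$.

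Finally I would observe that this kernel is trivial if and only if $\Span(P_1 x, \ldots, P_N x)^{\perp} = \{0\}$, which is equivalent to $P_1 x, \ldots, P_N x$ spanning an $M$-dimensional subspace of $\R^M$. Since the kernel being trivial is precisely the condition that $d(\sA_\Phi)_{\overline{x}}$ be injective, this establishes the proposition. There is no real obstacle here: the only technical point is the identification of tangent spaces on the quotient, and the computation itself reduces to the one-line derivative already carried out in Lemma \ref{lem.derivcalc}.
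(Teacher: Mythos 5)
Your proof is correct and follows essentially the same route as the paper: pass to the lift of $\sA_\Phi$ through the double cover $\R^M\smallsetminus\{0\}\to(\R^M\smallsetminus\{0\})/\pm 1$, apply Lemma \ref{lem.derivcalc} componentwise to identify the differential as $y\mapsto 2(\langle P_ix,y\rangle)_{i=1}^N$, and read off injectivity as the spanning condition. No substantive differences.
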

\begin{proof}
Consider the map $\sB_\Phi \colon \R^M \smallsetminus \{0\}\to \R^N$,
$x \mapsto \left(\langle P_1x, P_1x \rangle, \ldots , \langle P_Nx, P_Nx\rangle
\right)$.
The map $\sB_\Phi$ is the composition of $\sA_\Phi$
with the double cover $\R^M \smallsetminus \{0\} \to 
\left(\R^M \smallsetminus \{0\}\right)/\pm 1$.
Since the derivative of a covering map is an isomorphism, it suffices
to prove the proposition for the map $\sB_\Phi$. Applying
Lemma \ref{lem.derivcalc} to each component of $\sB_\Phi$
we see that $d\sB_\Phi$ is the linear transformation
$y \mapsto 2(\langle P_1x, y\rangle, \ldots , \langle
  P_N x, y\rangle)$.  Hence $(d\sB_\Phi)_x$ and thus $(d\sA_\Phi)_x$
is injective if and only
if there is no non-zero vector $y$ which is orthogonal to each
  $P_ix$, or equivalently the vectors $P_ix$ span all of $\R^M$.
\end{proof}

The proof of the theorem now follows from the following proposition.
\begin{Prop}
The map $\sA_\Phi$ is injective if and only if it is a global immersion.
\end{Prop}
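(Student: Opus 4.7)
The plan is to prove both directions via their contrapositives, using the polarization identity that $\langle P_i u, P_i u\rangle - \langle P_i v, P_i v\rangle = \langle P_i(u+v), P_i(u-v)\rangle$.

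For the direction that a global immersion implies injectivity, I would argue by contrapositive. Assume $\sA_\Phi$ is not injective, so there exist lifts $u, v \in \R^M \smallsetminus \{0\}$ with $u \neq \pm v$ and $\sA_\Phi(\overline u) = \sA_\Phi(\overline v)$. Set $a = u+v$ and $b = u-v$, both of which are nonzero since $u \neq \pm v$. For each $i$, the polarization identity gives
\[
0 = \langle P_i u, P_i u\rangle - \langle P_i v, P_i v\rangle = \langle P_i a, P_i b\rangle = \langle P_i a, b\rangle,
\]
where the last equality uses that $P_i$ is a self-adjoint projection. Hence $b$ is a nonzero vector orthogonal to every $P_i a$, so $P_1 a, \ldots, P_N a$ do not span $\R^M$. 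By Proposition \ref{Prop.immerchar}, $\sA_\Phi$ fails to be an immersion at $\overline a$.

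For the converse direction, suppose $\sA_\Phi$ fails to be an immersion at some $\overline x$. By Proposition \ref{Prop.immerchar} the vectors $P_1 x, \ldots, P_N x$ do not span $\R^M$, so there exists a nonzero $y \in \R^M$ with $\langle P_i x, y\rangle = 0$ for all $i$. Since $y$ can be replaced by $\lambda y$ for any $\lambda \neq 0$ without affecting this orthogonality condition, I may rescale so that $\lambda y \neq \pm x$, and then set $u = x + \lambda y$ and $v = x - \lambda y$; both are nonzero, and $u \neq \pm v$. Using $\langle P_i x, \lambda y\rangle = 0$ the cross terms in
\[
\langle P_i u, P_i u\rangle = \langle P_i x, P_i x\rangle + 2\langle P_i x, P_i(\lambda y)\rangle + \langle P_i(\lambda y), P_i(\lambda y)\rangle
\]
vanish (again by self-adjointness of $P_i$), and symmetrically for $v$, giving $\sA_\Phi(\overline u) = \sA_\Phi(\overline v)$. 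Hence $\sA_\Phi$ is not injective.

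The only mild technical point — not really a serious obstacle — is choosing the rescaling of $y$ in the second direction so that $u$ and $v$ are simultaneously nonzero and represent distinct classes in $(\R^M \smallsetminus\{0\})/\pm 1$; this is handled by picking any $\lambda$ with $\lambda y \notin \{x, -x\}$, which is possible since the line $\R y$ is infinite. Everything else reduces to the single algebraic identity above combined with the characterization of the immersion locus already established in Proposition \ref{Prop.immerchar}.
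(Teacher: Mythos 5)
Your proof is correct and takes essentially the same route as the paper: both directions reduce to Proposition \ref{Prop.immerchar} together with the identity $\langle P_i(u+v),P_i(u-v)\rangle=||P_iu||^2-||P_iv||^2$ and the fact that $P_i$ is a self-adjoint idempotent. Your explicit rescaling of $y$ to ensure $x\pm\lambda y\neq 0$ is a small point of extra care that the paper's version of the argument passes over.
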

\begin{proof}
  First assume that $\sA_\Phi$ is not an immersion. By Proposition
  \ref{Prop.immerchar} there exists an $x \neq 0$ such that $P_1x,
  \ldots , P_Nx$ fail to span $\R^M$. Let $y$ be a non-zero vector
  orthogonal to all the $P_ix$ and consider the vectors $x'= x+y$ and
  $y' = x-y$.  

Then 
$$\begin{array}{ccl}
|P_ix'||^2  & = & \langle P_ix', x'\rangle \; \; \text{since $P_i$ is an orthogonal
projection}\\
& = & \langle P_ix, x \rangle + \langle P_i y, y \rangle + \langle P_iy, x \rangle
+ \langle P_ix, y \rangle\\
& = & ||P_ix||^2 + ||P_i y||^2
\end{array}$$
where the last equality holds because  $$\langle P_iy, x \rangle =
\langle P_iy, P_ix \rangle = \langle P_ix, P_iy \rangle = \langle P_ix , y\rangle = 0.$$

%$\langle P_i x' , P_i x' \rangle = \langle P_x x',  x' \rangle.
%Then $\langle P_ix', x' \rangle = \langle P_ix , x
%  \rangle + \langle P_i y , y\rangle = ||P_ix||^2 + ||P_iy||^2$. (Here
%  we use the fact that $P_i$ is an orthogonal projection so $\langle
%  P_i x , x \rangle = \langle P_ix , P_ix\rangle.$).  

Likewise
  $||P_iy'||^2 =||P_ix||^2 + ||P_i y||^2$.  Hence, either $\sA_\Phi$
  is not injective or $x' = \pm y'$. However, if $x' = \pm y'$ then
  either $x =0$ or $y=0$ which is not the case. Thus $\sA_\Phi$ is not
  injective.

Conversely, suppose that $\sA_\Phi$ is an immersion and suppose that
there exist $x$ and $y$ such that $||P_ix||= ||P_i y||$ for all $i$.
We wish to show that $x = \pm y$. Suppose that $x\neq y$. Then
$x-y \neq 0$. Thus the linear transformation
$(d\sA_\Phi)_{x-y} \colon \R^M \to \R^N$, $z \mapsto \left(\langle P_i(x-y), z \rangle\right)_{i=1}^M $
is injective. On the other hand
$$\langle P_i(x-y),x+y \rangle = \langle P_ix,x \rangle - \langle P_iy,y\rangle
= ||P_i x||^2 - ||P_iy||^2 = 0.$$
(Here we again use the fact that $P_i$ is an orthogonal projection so $\langle P_ix, x\rangle = \langle P_ix , P_ix\rangle$). Hence $x+y=0$, ie $x = -y$.
\end{proof}
\subsection{Proofs of the corollaries}

\begin{proof}[Proof of Corollary \ref{cor.projection}]
Suppose that $P_1, \ldots , P_{M-1}$ have rank 1. Then there is a vector
$x$ such that $P_i x = 0$ for $i =1, \ldots , M-1$, so $P_1x \ldots , P_{M-1}x,\ldots , P_Nx$ cannot span $\R^M$ if $N \leq 2M-2$. Likewise if
$P_1,\ldots , P_{M-1}$ have rank $M-1$ then there exists a vector $y$
such that $P_iy =y$ for $i=1,\ldots , M-1$. In this case
$P_1x \ldots , P_Nx$ fail to span $\R^M$ if $M \leq 2M-3$.
\end{proof}

\begin{Cor}[Complement property \cite{BCE:06}]
If $P_1, \ldots , P_N$ all have rank 1 corresponding to lines
$L_1,\ldots , L_N$ then $\sA_\Phi$ is injective if and only if 
for every partition of $\{1, \ldots , N\}$ into two set
$S, S'$ one of the sets of lines $\{L_i\}_{i\in S}$ or 
$\{L_j\}_{j \in S'}$ spans $\R^M$.
\end{Cor}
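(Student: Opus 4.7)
The plan is to reduce the corollary to Theorem \ref{Thm.projectionchar} by translating the spanning condition on $P_1x,\ldots,P_Nx$ into a condition on the lines $L_i$. Fix unit generators $\phi_i \in L_i$, so that $P_ix = \langle x,\phi_i\rangle\phi_i$. Then $P_ix$ is either zero or a non-zero scalar multiple of $\phi_i$, and the key observation is that for every non-zero $x \in \R^M$, the span of $\{P_1x,\ldots,P_Nx\}$ equals the span of the lines $\{L_i : \langle x,\phi_i\rangle \neq 0\}$. With this in hand, Theorem \ref{Thm.projectionchar} becomes: $\sA_\Phi$ is injective iff for every non-zero $x$, the subcollection of lines $L_i$ not orthogonal to $x$ spans $\R^M$.

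For the forward direction I would argue the contrapositive. Suppose a partition $\{1,\ldots,N\} = S \cup S'$ exists with neither $\{L_i\}_{i \in S}$ nor $\{L_j\}_{j \in S'}$ spanning $\R^M$. Let $W$ be the span of $\{L_j\}_{j \in S'}$, which is a proper subspace, and pick a non-zero $x \in W^\perp$. Then $\langle x,\phi_j\rangle = 0$ for all $j \in S'$, so $\{i : \langle x,\phi_i\rangle \neq 0\} \subseteq S$. Consequently the span of $\{P_1x,\ldots,P_Nx\}$ is contained in the span of $\{L_i\}_{i \in S}$, which is proper; by Theorem \ref{Thm.projectionchar} the map $\sA_\Phi$ is not injective.

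For the converse, assume the complement property and fix any non-zero $x \in \R^M$. Set $S' = \{i : \langle x,\phi_i\rangle = 0\}$ and $S$ its complement. The lines $\{L_j\}_{j \in S'}$ all lie in $x^\perp$, so they cannot span $\R^M$. By the complement property, $\{L_i\}_{i \in S}$ must span $\R^M$, and this span coincides with the span of $\{P_1x,\ldots,P_Nx\}$. Theorem \ref{Thm.projectionchar} then gives injectivity of $\sA_\Phi$.

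There is no real obstacle beyond carefully keeping track of which set of lines gets hit; the whole argument is a bookkeeping translation once one observes that orthogonality $x \perp \phi_i$ is exactly what kills $P_ix$, and that the "bad" set $S'$ in the non-injectivity direction can be manufactured from any proper subspace of $\R^M$ via its orthogonal complement.
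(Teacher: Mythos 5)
Your proof is correct and follows essentially the same route as the paper: both arguments hinge on the observation that $P_ix$ is a non-zero vector on $L_i$ precisely when $x\not\perp L_i$, so the span of the $P_ix$ equals the span of the lines not orthogonal to $x$, after which Theorem \ref{Thm.projectionchar} turns the complement property into the spanning condition. The only cosmetic difference is that you prove the ``complement property implies injective'' direction directly while the paper argues its contrapositive, and your choice of which side of the partition $x$ is taken orthogonal to is the mirror image of the paper's; neither affects the substance.
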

\begin{proof}
  Suppose $S \coprod S'$ is a partition of $\{1, \ldots , N\}$ such
  that neither subset of lines $\{L_i\}_{i \in S}$ or $\{L_i\}_{i \in
    S'}$ spans.  Let $x$ be a vector orthogonal to the lines $\{L_i\}_{i \in
    S}$. Thus the span of the vectors $P_ix$ is contained in the span
  of the lines $\{L_j\}_{j \in S'}$ which by assumption do not span
  $\R^M$.

Conversely, suppose that for some $x$ the vectors $P_1x, \ldots , P_Nx$
fail to span $\R^M$. 
Let $S = \{i| P_ix = 0\}$ and let $S' = \{j| P_j x \neq 0\}$.
Since the vectors $\{P_j x\}_{j \in S'}$ are parallel to the lines
$\{L_j\}_{j \in S'}$ we see that these vectors cannot span $\R^M$. On the
other hand the non-zero vector $x$ is orthogonal to each line in the collection
$\{L_i\}_{i \in S}$ so these lines cannot span either.
\end{proof}

\subsection{An example}
We revisit \cite[Example 5.3]{CCPW:13} in the context of Theorem
\ref{Thm.projectionchar}. Let $\{\phi_n\}_{n=1}^3$ and
$\{\psi_n\}_{n=1}^3$ be orthonormal bases for $\R^3$ such that
$\{\phi_n\} \cup \{\psi_n\}$ is full spark (meaning that any 3 element
subset spans).  Since $M = 2+1$ at least 5 projections are required
for phase retrieval by Theorem \ref{Thm.grunge}.  Cahill, Casazza, Peterson and Woodland consider
two collections of subspaces.
$$\begin{array}{rclccrcl}
W_1 &  = & \Span\left(\{\phi_1, \phi_3\}\right)
&  & W_1^\perp & = & \Span\left(\{\phi_2\}\right)\\
W_2 &  = & \Span\left(\{\phi_2, \phi_3\}\right)
&  & W_2^\perp & = & \Span\left(\{\phi_1\}\right)\\
W_3 &  = & \Span\left(\{\phi_3\}\right)
&  & W_3^\perp & = & \Span\left(\{\phi_1, \phi_2\}\right)\\
W_4 &  = & \Span\left(\{\psi_1\}\right)
&  & W_4^\perp & = & \Span\left(\{\psi_2, \psi_3\}\right)\\
W_5 &  = & \Span\left(\{\psi_2\}\right)
&  & W_5^\perp & = & \Span\left(\{\psi_1, \psi_3\}\right)
\end{array}
$$
and showed the collection of orthogonal projections onto
$\{W_i\}_{i=1}^5$ admits phase retrieval while the collection of
orthogonal projections onto $\{W_i^\perp\}_{i=1}^5$ does not.

Using Theorem \ref{Thm.projectionchar} it is easy to see
that the orthogonal projections corresponding to $\{W_i^\perp\}$ do not admit
phase retrieval since the vector $\phi_3$ is orthogonal 
to $W_1^{\perp}, W_2^{\perp}, W_3^{\perp}$. Thus, the images
of the vector $\phi_3$ under the 5 projections cannot span $\R^3$.

Now consider the other collection of orthogonal projections onto
$W_1, \ldots , W_5$  which we denote by
$P_1, \ldots, P_5$.  Since $\{\phi_1, \phi_2, \phi_3, \psi_1, \psi_2,
\psi_3\}$ is full spark the vectors $\{\phi_3, \psi_1, \psi_2\}$ span. Thus
if $x \in \R^3$ is not orthogonal to any of $\phi_3, \psi_1, \psi_2$
then $P_3x, P_4x, P_5x$ span.  If $x$ is orthogonal to $\phi_3$ then
it lies in the plane spanned by $\phi_1$ and $\phi_2$ and is also not
orthogonal to one of $\psi_1$ or $\psi_2$, say $\psi_1$. If $P_5x = 0$
then $x$ is orthogonal to $\psi_2$ which means that it cannot be
orthogonal to either of $\phi_1$ or $\phi_2$ for otherwise $\psi_2$
would have to be parallel to one of the $\psi_i$. It would then follow
that the vectors $P_1x, P_2x, P_4x$ span. If $P_5x \neq 0$ then either
$P_1x, P_4x, P_5x$ or $P_2x, P_4x, P_5x$ span.  If $P_3x \neq 0$ then
$P_1x, P_2x, P_3x$ span if $x$ isn't orthogonal to either $\phi_1$ or
$\phi_2$. If $x$ is orthogonal to $\phi_1$ but not $\phi_2$ then the
vectors $P_2x, P_3x,P_4x, P_5x$ must span.  If $x$ is orthogonal to
both $\phi_1, \phi_2$ then $P_3x, P_4x, P_5x$ span.

\section{Proof of Theorem \ref{Thm.projectionsuff}}
Our proof is similar to previous proofs of generic sufficiency bounds
for frames \cite{BCE:06, CEHV:15} where an incidence variety is considered.
\subsection{ An affine variety whose real points 
are the space of orthogonal
  projections}\label{sec:background}
\begin{Prop}
There is an affine irreducible subvariety ${\mathcal P}_k(M) \subset \A^{M \times M}$ of complex dimension $k(M-k)$ whose real points are the set of orthogonal projections of rank $k$.
\end{Prop}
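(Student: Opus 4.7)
The plan is to take the obvious polynomial description and then extract irreducibility and dimension by parametrizing with a Grassmannian. Define $\mathcal{P}_k(M) \subset \A^{M \times M}$ as the affine algebraic set cut out by the entries of $P^2 - P$ and $P - P^T$, together with the linear equation $\trace(P) = k$. The real points are handled directly: a real symmetric idempotent is orthogonally diagonalizable with eigenvalues in $\{0,1\}$, and so is an orthogonal projection of rank equal to its trace, namely $k$. The remaining (and main) content is showing that the complex algebraic set $\mathcal{P}_k(M)$ is irreducible of complex dimension exactly $k(M-k)$.

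For this I would introduce the natural Grassmannian parametrization. Let $U \subset \Gr(k,\C^M)$ be the locus of $k$-planes $L$ on which the standard complex bilinear form $\langle x,y\rangle = x^T y$ restricts non-degenerately. This is Zariski open: on any affine chart of the Grassmannian where $L$ is the column span of an $M\times k$ matrix $V$, non-degeneracy is exactly the condition $\det(V^T V)\neq 0$. It is also non-empty since it contains $\Span(e_1,\ldots,e_k)$, so $U$ is an irreducible variety of complex dimension $k(M-k)$. On the same chart the formula $P_L = V(V^T V)^{-1} V^T$ defines a symmetric idempotent of trace $k$ that is visibly independent of the frame $V$ (a change of basis $V \mapsto VG$ with $G \in \GL_k$ leaves the formula invariant), so we obtain an algebraic morphism $\psi \colon U \to \mathcal{P}_k(M)$. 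The map $\psi$ is injective because $L = \operatorname{im}(P_L)$. Granting surjectivity, $\mathcal{P}_k(M)$ is the image of the irreducible variety $U$ and hence irreducible, and the semi-continuity of fiber dimension theorem quoted in Section 2, applied to the injective morphism $\psi$, forces $\dim \mathcal{P}_k(M) = \dim U = k(M-k)$.

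The main obstacle is verifying surjectivity of $\psi$: given a complex symmetric idempotent $P$ of trace $k$, I must show that $L := \operatorname{im}(P)$ lies in $U$ and that $P = P_L$. For the first point, observe that $u \in L^\perp$ if and only if $u^T P y = 0$ for all $y$, which in turn is equivalent to $P^T u = Pu = 0$; thus $L^\perp = \ker(P)$, and combined with the idempotent decomposition $\operatorname{im}(P) \oplus \ker(P) = \C^M$ this gives $L \cap L^\perp = 0$, so the bilinear form is non-degenerate on $L$. For the second, $P$ and $P_L$ are both symmetric idempotents that are the identity on $L$ and vanish on $L^\perp$, so they coincide. This establishes $\psi(U) = \mathcal{P}_k(M)$ and completes the outline.
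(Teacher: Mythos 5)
Your proof is correct, but it takes a genuinely different route from the paper's. The paper identifies $\mathcal{P}_k(M)$ with the orbit of $E_k=\diag(1,\ldots,1,0,\ldots,0)$ under conjugation by the complex orthogonal group $SO(M,\C)$: every complex idempotent is diagonalizable and, being in addition symmetric, can be diagonalized by a complex \emph{orthogonal} matrix, so irreducibility follows from irreducibility of $SO(M,\C)$ and the dimension from the orbit--stabilizer count $\binom{M}{2}-\binom{k}{2}-\binom{M-k}{2}=k(M-k)$. You instead exhibit $\mathcal{P}_k(M)$ as the bijective image of the open locus $U\subset\Gr(k,\C^M)$ on which the standard bilinear form is non-degenerate, via $L\mapsto V(V^TV)^{-1}V^T$, and pull irreducibility and the dimension back from the Grassmannian. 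Your route avoids the (true but not completely obvious) fact that a diagonalizable complex symmetric matrix is orthogonally diagonalizable over $\C$, replacing it with the cleaner observation that for a symmetric idempotent $\ker P=(\operatorname{im}P)^\perp$ meets $\operatorname{im}P$ trivially; it also sidesteps the general theory of orbits of algebraic groups. What the paper's approach buys is the transitive $SO(M,\C)$-symmetry of $\mathcal{P}_k(M)$ and a dimension count with no charts. One point you leave implicit: to place $L=\operatorname{im}P$ in $\Gr(k,\C^M)$ you need $\dim L=k$, which holds because $\rank P=\trace P$ for an idempotent in characteristic zero; that deserves a sentence, but it is not a gap.
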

\begin{Remark}
  It is crucial for our proof that ${\mathcal P}_k(M)$ be irreducible
  since will need to know that any proper subvariety has strictly
  smaller dimension.
\end{Remark}

% \begin{comment}
% To prove our theorem we need to understand the algebraic structure on
% the space of orthogonal projections of a given rank. Precisely, the
% space of rank $k$ projections in $\R^M$ is the orbit by the action of
% orthogonal group $SO(M)$ on the standard rank $k$ projection $E_k=
% \diag(1,\ldots 1, 0, \ldots , 0)$ given by $A \cdot E_k = AE_k
% A^t$. Since the orthogonal group is a compact real Lie group, the
% space of projections is compact manifold isomorphic to the real
% Grassmannian $G(k,M)$ However, to prove our theorem we view $SO(M)$ as
% the real points of an affine algebraic group. From this we see that in fact the space of
% projections is the real points of the 
% orbit of a point of the affine space $\A^{M \times
%   M}$ parametrizing $M \times M$ matrices under the action of the
% affine algebraic group $SO(M,\C)$. As a consequence it follows that this
% space is the set of real points of an {\em affine} variety.
% \end{comment}

\begin{proof}
  Let ${\mathcal P}_k(M)$ be the algebraic subset of $\A^{M \times M}$
  defined by the equations $P^2 = P$, $P = P^t$ and $\trace(P) =
  k$. A real matrix satisfies these equations
if and only it is an orthogonal projection. So ${\mathcal P}_k(M)_\R$
is the set of orthogonal projections.

We now show that ${\mathcal P}_k(M)$ is an irreducible variety
of dimension $k(M-k)$. 

Let $P$ be a matrix representing a point of ${\mathcal P}_k(M)$.
Since $P^2
  =P$ the eigenvalues of $P$ lie in the set $\{0,1\}$ and $P$ is diagonalizable
. Thus $P$ is a symmetric and diagonalizable\footnote{
Note that a complex symmetric matrix need not be diagonalizable. For
example the matrix $$\left(\begin{array}{cc} 1 & i \\ i & -1\end{array}
\right)$$ is non-diagonalizable.} matrix. Thus it is conjugate by an element
of the complex orthogonal group $SO(M,\C)$ to a diagonal matrix. Finally the
  condition that $\trace P = k$ implies that $P$ is conjugate to the
  diagonal matrix $E_k= \diag(1,1,\ldots 1, 0, \ldots, 0)$ where there
  are $k$ ones and $M-k$ zeros. Conversely, any matrix
of the form $P = AE_k A^t$ with $A \in SO(M,\C)$ satisfies $P^t =P$, $P^2 =P$
and $\trace P = k$.

Thus ${\mathcal P}_k(M)$ can be identified with the $SO(M,\C)$
orbit of the matrix $E_k$ under the conjugation. Since $SO(M,\C)$ is 
an irreducible algebraic group, so is the orbit. Finally, 
the stabilizer of $E_k$ is isomorphic to the subgroup $SO(k) \times SO(M-k)$.
The dimension of the algebraic group $SO(M,\C)$ is $\binom{M}{2}$.
Thus the dimension of ${\mathcal P}_k(M)$ is $\binom{M}{2} - \binom{k}{2}
- \binom{M-k}{2} = k(M-k)$.
\end{proof}

\subsection{Completion of the Proof of theorem \ref{Thm.projectionchar}}
Since the vectors $P_1x, \ldots , P_Nx$ fail to span $\R^M$ if an only
if there is a non-zero vector $y$ which is orthogonal to each
$P_ix$, a collection $\sA_\Phi$ fails to be injective if and only
there are non-zero vectors $x, y$  such
that 
$$y^tP_1x = y^tP_2x = \ldots = y^t P_Mx = 0.$$

Consider the incidence set of tuples $\{(P_1, \ldots, P_N, x,y) |
y^tP_ix = 0\}$ where $P_i \in {\mathcal P}_{k_i}$ and $x,y \in \C^{M}
\smallsetminus \{0\}$.  Since the equations $y^tP_ix = 0$ are
homogeneous in $x$ and $y$
there is a corresponding incidence set
$$\cI = \cI_{k_1, \ldots , k_N,M} \subset
{\mathcal P}_{k_1} \times \ldots \times {\mathcal P}_{k_N}
\times \P^{N-1} \times \P^{N-1}.$$ 
The real points of the algebraic set $\cI$
parametrize tuples of orthogonal projections and non-zero vectors
$(P_1, \ldots , P_N , x, y)$ such that $P_ix$ is orthogonal to $y$
for each $i$.  By Theorem \ref{Thm.projectionchar} if
$(P_1,\ldots, P_N, x,y) \in \cI_\R$ then the map
$\sA_\Phi$ isn't injective for the collection of projections $\Phi = (P_1, \ldots, P_N)$.

We will show that when $N \geq 2M-1$ the variety $\cI$ contains an
open set of complex dimension less than that of ${\mathcal P}_{k_1}
\times \ldots \times {\mathcal P}_{k_N}$ that contains all of the real
points of $\cI$.  This means that $(\cI)_\R$ has real dimension less
than $\sum_{i=1}^M k_i(M-k_i)$. Hence for generic projections $P_1,
\ldots, P_N$ there are no non-zero real vectors $x, y$ such that
$\langle P_ix,y \rangle =0$ for all $i$. In other words $\sA_\Phi$ is
injective for generic collections of projections $P_1, \ldots , P_N$
with $N \geq 2M-1$.

\begin{Prop} \label{Prop.incidencedim}
There is an open subset of 
$\cI$
which contains $\cI_\R$
and has dimension  $\sum_{i=1}^{N} k_i(M -k_i) + 2M -2 -N$.
In particular if $N \geq 2M-1$
this open set has dimension strictly smaller than $\dim \prod_{i=1}^N {\mathcal P_{k_i}}$.
\end{Prop}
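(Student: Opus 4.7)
The plan is to compute $\dim \cI$ via the projection $\pi \colon \cI \to \P^{M-1} \times \P^{M-1}$, $(P_1,\ldots,P_N,x,y) \mapsto (x,y)$, combined with Krull's Hauptidealsatz. The ambient variety
$Y := \prod_{i=1}^N \mathcal{P}_{k_i}(M) \times \P^{M-1} \times \P^{M-1}$
is irreducible of complex dimension $\sum_i k_i(M-k_i) + 2M - 2$ by Section~4.1, and $\cI \subset Y$ is cut out by the $N$ bihomogeneous equations $y^t P_i x = 0$. Iterated application of Krull's theorem in $Y$ therefore yields the lower bound $\dim Z \geq \sum_i k_i(M-k_i) + 2M - 2 - N$ on every irreducible component $Z$ of $\cI$.

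For the matching upper bound I would analyze the fibers of $\pi$. For fixed $(x,y) \in \P^{M-1} \times \P^{M-1}$, the fiber factors as $\prod_i Z_i(x,y)$ where $Z_i(x,y) = \{P \in \mathcal{P}_{k_i}(M) : y^t P x = 0\}$ is the hyperplane section of $\mathcal{P}_{k_i}(M)$ by the linear functional $\ell_{x,y}(P) := y^t P x$. The crux---and what I expect to be the main obstacle---is to verify that $\ell_{x,y}$ is \emph{not} identically zero on $\mathcal{P}_{k_i}(M)$ for any nonzero $x,y \in \C^M$. Granting this, Krull's theorem applied to the irreducible variety $\mathcal{P}_{k_i}(M)$ (of dimension $k_i(M-k_i)$) forces every component of $Z_i(x,y)$ to have dimension $k_i(M-k_i)-1$, so every fiber of $\pi$ has dimension exactly $\sum_i k_i(M-k_i) - N$.

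My approach to the obstacle is to exhibit, for any nonzero $x,y$, an explicit rank-$k_i$ projection $P$ with $y^t P x \neq 0$. First choose a generic $v \in \C^M$ so that $v^t v$, $v^t x$, and $v^t y$ are all nonzero (each is a nonempty Zariski-open condition because $x,y \neq 0$). Then, using the assumption $k_i \leq M-1$, pick a $(k_i - 1)$-dimensional subspace $W \subset \{w : v^t w = 0\} \cap \{w : y^t w = 0\}$ on which the symmetric form restricts non-degenerately; this is generically possible because the intersection has dimension $M-2 \geq k_i - 1$. The orthogonal projection $P$ onto $\C v \oplus W$ then lies in $\mathcal{P}_{k_i}(M)$, and since $W$ is orthogonal to $y$ a direct computation gives $y^t P x = (y^t v)(v^t x)/(v^t v) \neq 0$, as required.

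Finally, semi-continuity of fiber dimension applied to $\pi$ restricted to each irreducible component $Z$ of $\cI$ shows $\dim Z \leq 2M - 2 + \sum_i k_i(M-k_i) - N$ whenever $Z$ dominates $\P^{M-1} \times \P^{M-1}$, and any hypothetical non-dominating component would have strictly smaller dimension, violating the Krull lower bound. Therefore $\cI$ is equidimensional of the asserted dimension, and the open subset in the statement may be taken to be $\cI$ itself, which contains $\cI_\R$ by construction. When $N \geq 2M-1$, this dimension is strictly smaller than $\dim \prod_i \mathcal{P}_{k_i}(M) = \sum_i k_i(M-k_i)$, giving the final assertion of the proposition.
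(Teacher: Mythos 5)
Your strategy is essentially the paper's: fiber $\cI$ over $\P^{M-1}\times\P^{M-1}$ via $p_2$, identify each fiber with a product of hyperplane sections $Z_i(x,y)\subset \mathcal{P}_{k_i}(M)$, and use irreducibility of $\mathcal{P}_{k_i}(M)$ plus Krull's Hauptidealsatz to pin down the fiber dimension. The genuine difference is \emph{where} you establish that $\ell_{x,y}$ is not identically zero. The paper does this only for \emph{real} $(x,y)$ (where orthogonal complements behave Euclidean-ly), and then uses semi-continuity of fiber dimension to manufacture a dense open $U\subset\P^{M-1}\times\P^{M-1}$ containing $\P^{M-1}_\R\times\P^{M-1}_\R$ over which the fibers have the right dimension, taking $p_2^{-1}(U)$ as the open set. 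You prove non-vanishing for \emph{all} nonzero complex $(x,y)$, which, combined with your Krull lower bound (which the paper does not need), shows $\cI$ is equidimensional of the stated dimension, so the open set can be all of $\cI$. That is a stronger conclusion, bought at the price of having to handle isotropic vectors over $\C$; the paper's weaker real-only version suffices for Theorem \ref{Thm.projectionsuff}.

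The price is where your argument has a gap. The claim that a $(k_i-1)$-dimensional $W\subset v^\perp\cap y^\perp$ with non-degenerate restricted form exists ``because the intersection has dimension $M-2\geq k_i-1$'' does not follow from the dimension count: you need the rank of the symmetric form restricted to $v^\perp\cap y^\perp$ to be at least $k_i-1$, and your three stated conditions on $v$ do not guarantee this. For example, with $M=3$, $k_i=2$, $y=x=e_1$ and $v=(1,1,\sqrt{-1})$, one has $v^tv=v^tx=v^ty=1\neq 0$, yet $v^\perp\cap y^\perp$ is the isotropic line spanned by $(0,-\sqrt{-1},1)$, so no admissible $W$ exists and the projection onto $\C v\oplus W$ is not defined. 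The repair is routine: impose the further nonempty open condition $\langle v,v\rangle\langle y,y\rangle\neq\langle v,y\rangle^2$ (automatic when $y$ is isotropic, since then $v^ty\neq 0$ forces the component of $y$ in $v^\perp$ to be non-isotropic), after which the form on $v^\perp\cap y^\perp$ is non-degenerate of dimension $M-2$ and a generic $W$ works. With that condition added, your construction and hence the whole proof goes through.
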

\begin{Remark}
Note that since we do not know that $\cI$ is irreducible
we are not asserting that $\cI$ has dimension
$\sum_{i=1}^{N} k_i(M -k_i) + 2M -2 -N$. Instead, we are proving that the union
of the 
irreducible components of $\cI$ that contain all of the real points has
this dimension.
\end{Remark}

\begin{proof}
We show that the image of the 
projection $p_2 \colon \cI \to \P^{N-1} \times \P^{N-1}$
contains a dense open set $U \subset \P^{M-1} \times \P^{M-1}$
which contains $\P^{M-1}_\R \times \P^{M-1}_R$
such that for each $x,y \in U$
the fiber $p_2^{-1}(x,y)$ is non-empty and has 
dimension $\sum_{i=1}^N (k_i(M-k_i) -1)$.
It follows that the incidence $\cI$ contains an
open set of dimension $\sum_{i=1}^{N} k_i(M -k_i) + 2M -2 -N$
and that this open set contains $\cI_\R$.

Observe that the fiber $p_2^{-1}(x,y)$ 
is the algebraic subset  $$\cI_{x,y} \subset \prod_{i=1}^N {\mathcal P_{k_i}}$$
defined by the linear equations $y^tP_1x =0, \ldots, y^tP_N x=0$.
This algebraic subset is the product $\prod_{i=1}^N (\cI_{x,y})_i$
where $(I_{x,y})_i$ is the algebraic subset of ${\mathcal P_{k_i}}$
defined by the linear equation $y^t P_i x = 0$.
\begin{Lemma} \label{Lem.fiberdim}
For each $k$ with $1 \leq k  \leq M-1$ there is a dense open subset 
 $U_k \subset \P^{M-1} \times \P^{M-1}$ 
containing $\P^{M-1}_\R \times \P^{M-1}_\R$ such that for $x, y \in U_{k}$
the algebraic subset
 ${\mathcal P}_{x,y}$ of ${\mathcal P}_k$ defined by the equation
$y^t P x = 0$ has complex dimension $k(M-k) -1$.
\end{Lemma}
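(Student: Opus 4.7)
The plan is to apply Krull's Hauptidealsatz. The subvariety $\mathcal{P}_{x,y}\subset\mathcal{P}_k$ is cut out from the irreducible affine variety $\mathcal{P}_k$ of dimension $k(M-k)$ by the single linear equation $y^tPx=0$. Hence $\mathcal{P}_{x,y}$ is either empty, all of $\mathcal{P}_k$, or of pure complex dimension $k(M-k)-1$. It therefore suffices to produce an open $U_k\subset\P^{M-1}\times\P^{M-1}$ containing $\P^{M-1}_\R\times\P^{M-1}_\R$ on which neither of the first two possibilities occurs.

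I would first rule out the case $\mathcal{P}_{x,y}=\mathcal{P}_k$ for every nonzero $(x,y)$. The linear functional $P\mapsto y^tPx=\operatorname{tr}(xy^tP)$ vanishes identically on $\mathcal{P}_k$ if and only if it vanishes on $\operatorname{Span}_\C(\mathcal{P}_k)$. Since $\mathcal{P}_k$ is the $SO(M,\C)$-conjugation orbit of $E_k$, this span is $SO(M,\C)$-invariant; decomposing the $M\times M$ symmetric matrices as $\C\cdot I$ plus the traceless symmetric representation (irreducible for $M\geq 3$), and observing that $E_k=\frac{k}{M}I+(E_k-\frac{k}{M}I)$ has nonzero components in each summand for $1\leq k\leq M-1$, one obtains that the span is the full space of symmetric matrices. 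So $y^tPx=0$ on $\mathcal{P}_k$ forces $y^tSx=0$ for every symmetric $S$, which forces the symmetric part $xy^t+yx^t$ of $xy^t$ to vanish; an entrywise check (the relation $x_iy_j+y_ix_j=0$ forces $y=cx$, and then $2cx_ix_j=0$) then yields $x=0$ or $y=0$.

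For the non-empty condition, I would set $U_k=\{(x,y):x^tx\neq 0\text{ and }y^ty\neq 0\}$. This is Zariski open, and since any nonzero real vector satisfies $x^tx=\|x\|^2>0$ it contains every real point. For $(x,y)\in U_k$ the splitting $\C^M=\C x\oplus x^\perp$ is bilinearly orthogonal, and the form restricts non-degenerately to the $(M-1)$-dimensional space $x^\perp$, which therefore admits a bilinearly orthonormal basis $e_1,\ldots,e_{M-1}$. Any $k\leq M-1$ of these vectors span a non-degenerate subspace $L$ whose associated projection $P_L=\sum_{i=1}^k e_ie_i^t$ satisfies $P_L^2=P_L$, $P_L^t=P_L$, and $\operatorname{tr}P_L=k$, so $P_L\in\mathcal{P}_k$; and $P_Lx=0$ because $L\subset x^\perp$, giving $P_L\in\mathcal{P}_{x,y}$.

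The step I expect to be most delicate is the span computation in the second paragraph — concretely, the verification that $xy^t+yx^t=0$ cannot happen for nonzero $x,y$. If one prefers to avoid representation theory, one can substitute a direct construction: pick $v$ on the quadric $v^tv=1$ with $v^tx\neq 0$ and $v^ty\neq 0$, and extend $\{v\}$ to a bilinearly orthonormal $k$-frame $v,u_2,\ldots,u_k$ with each $u_i\in v^\perp\cap x^\perp$. Then the corresponding projection $P$ satisfies $y^tPx=(y^tv)(v^tx)+\sum_{i\geq 2}(y^tu_i)(u_i^tx)=(y^tv)(v^tx)\neq 0$; the only wrinkle is verifying that $v^\perp\cap x^\perp$ carries a non-degenerate form of dimension at least $k-1$, which holds whenever $\operatorname{span}(v,x)$ is non-degenerate and can be arranged by a generic choice of $v$ on the quadric.
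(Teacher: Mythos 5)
Your proof is correct, but the route is genuinely different from the paper's after the shared first step (Krull's Hauptidealsatz reduces everything to showing $\mathcal{P}_{x,y}$ is non-empty and proper on a suitable open set). The paper only verifies non-emptiness and properness at \emph{real} points $(x,y)$, using honest real orthogonal projections: $P$ projecting onto a $k$-plane orthogonal to $x$, and $Q = Q_{L_1}+Q_{L_{k-1}}$ built from the line through $x+\lambda y$; it then has to invoke the semi-continuity of fiber dimension for the incidence variety $\cI_k \to \P^{M-1}\times\P^{M-1}$ to manufacture a Zariski open $U_k$ and argue (since the fiber dimension is constant on the Zariski-dense set of real points and the minimal-dimension locus meets it) that $U_k$ contains all real points. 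You instead work directly over $\C$: properness everywhere via the $SO(M,\C)$-representation decomposition of $\Span_\C(\mathcal{P}_k)$ and the observation that $xy^t+yx^t=0$ forces $x=0$ or $y=0$, and non-emptiness on the \emph{explicit} open set $\{x^tx\neq 0,\ y^ty\neq 0\}$ via a bilinearly orthonormal basis of $x^\perp$. What your approach buys is an explicit $U_k$ and no appeal to the fiber-dimension theorem at all; what the paper's buys is that every construction stays inside real Euclidean geometry and needs no representation theory.

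Two small points to tidy. First, the irreducibility of the traceless symmetric representation fails for $M=2$ (where $SO(2,\C)\cong\C^*$ is abelian), so that case of your span computation needs a separate one-line check (the conic $\mathcal{P}_1(2)$ visibly spans all symmetric $2\times 2$ matrices), or you can simply fall back on the direct construction of $Q$ you sketch in your last paragraph, which also covers $M=2$. Second, the parenthetical ``forces $y=cx$'' in your entrywise check is stated a bit loosely; the clean version is that if $x_1\neq 0$ then $i=j=1$ gives $y_1=0$ and then $i=1$ gives $x_1y_j=0$ for all $j$, so $y=0$. Neither point affects the correctness of the argument.
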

Let $U$ be the intersection of all of the $U_{k_i}$ in $\P^{M-1} \times \P^{M-1}$.
By Lemma \ref{Lem.fiberdim} the inverse image of 
$U$ under the projection $\cI \to \P^{M-1}\times \P^{M-1}$ has
dimension equal to $\sum_{i=1}^{N} k_i(M -k_i) + 2M -2 -N$
and contains all of the real points.
\end{proof}

\begin{proof}[Proof of Lemma \ref{Lem.fiberdim}]
The fiber  ${\mathcal P}_{x,y}$ is defined by a single equation in 
the affine variety ${\mathcal P}_k$. Therefore,  by Krull's Hauptidealsatz 
${\mathcal P}_{x,y}$ has dimension $k(M-k) -1$ unless
the equation $y^t P x$ vanishes identically on ${\mathcal P}_k$
or the equation $y^t Px$ does not vanish at all in which 
case ${\mathcal P}_{x,y}$ is empty.

We first show
that if $x,y$ are non-zero vectors in $\R^M$ we can find $P,Q \in {\mathcal P}_k$ such that $y^t P x = 0$ and $y^t Qx\neq 0$. This implies that ${\mathcal P}_{x,y}$ is non-empty and not all of ${\mathcal P}_{x,y}$.

To find $P$ such that $y^t P x=0$ observe that given any non-zero real vector
$x$ we can find a  linear subspace $L$ of dimension $k < M$ 
which is orthogonal to $x$.
If $P_L$ is the orthogonal projection onto $L$ then 
$P_Lx = 0$ and so $y^T P_L x = 0$ as well.

To find $Q$ such that $y^t Q x \neq 0$
requires more care. Since $x$ and $y$ are real vectors
$\langle x, x \rangle \neq 0$ and $\langle y , y \rangle \neq 0$.
Hence  $\langle x + \lambda y, y \rangle$
and $\langle x + \lambda y, x\rangle$ are non-zero for all but finitely
many 
values of $\lambda$. Choose $\lambda$ such that the above inner
products are non-zero and let $L_1$ be the line spanned by $x + \lambda y$.
Let $Q_{L_1}$ be the orthogonal projection onto this line. Then
$Q_{L_1}x$ is non-zero and parallel to $x + \lambda y$ 
so $y^t Q_{L_1} x = \langle Q_{L_1}x, y \rangle \neq 0$
since we also chose $\lambda$ so that $ x + \lambda y$
 is not orthogonal to $y$.

Now let $L_{k-1}$ be any $(k-1)$-dimensional linear subspace
in the orthogonal complement of the linear subspace spanned
by $x$ and $y$ and let $Q_{L_{k-1}}$ be the orthogonal projection
onto this subspace. Then $Q = Q_{L_1} + Q_{L_{k-1}}$ is
the desired projection.

By the theorem on the dimension of the fibers
applied to the morphism
$$\cI_k = \{(P,x,y)| y^tPx =0\} \subset {\mathcal P}_k
\times \P^{M-1} \times \P^{M-1} \to \P^{M-1} \times \P^{M-1}$$
then there is a dense 
open subset $U_k \subset \P^{M-1} \times \P^{M-1}$
where the dimension of the fiber is constant.
Since
the set of real points of $\P^{M-1} \times \P^{M-1}$ has maximal dimension
it is dense, and therefore $U_k$ contains real points. On the other hand
we showed that the dimension of ${\mathcal P}_{x,y}$ is constant for all real points $(x,y)$ of  $\P^{M-1} \times \P^{M-1}$. Therefore $U_k$ contains
$\P^{M-1}_\R \times \P^{M-1}_\R$.
\end{proof}

\section{The case of fewer measurements} \label{sec:2M-2}
Here we prove that if $M = 2^k +1$ and $N \leq 2M-2$
then for any collection of projections $P_1, \ldots , P_N$
the map $\sA_\Phi$ is not injective. Since we can always add projections
to a collection we may assume that $N = 2M-2$.

By Theorem \ref{Thm.projectionchar} 
the map $\sA_\Phi$ is not injective if and only
there is a pair $(x,y) \in \P^{M-1}_\R \times \P^{M-1}_\R$ 
such that $y^t P_i x  = 0$ for all $i$. The equation
$y^tP_ix =0$ is bihomegenous of degree 1 in $x$ and $y$ e, so
we can consider the subvariety  $Z \subset \P^{M-1} \times \P^{M-1}$
defined by the vanishing of the $2M-2$ bilinear forms $\{y^T P_ix\}_{i=1}^{2M-2}$.
We wish to show that if $M = 2^k+1$ then $Z$ has a real point.

\begin{Lemma}\label{lem.firstreduction}
If $Z$ has a non-empty intersection with diagonal in $\P^{M-1} \times \P^{M-1}$
then $\sA_\Phi$
is not injective. 
\end{Lemma}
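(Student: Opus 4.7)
The plan is to convert a complex point on the diagonal of $Z$ into a real witness to non-injectivity via Theorem~\ref{Thm.projectionchar}. Suppose $([x],[x]) \in Z$ for some $[x] \in \P^{M-1}_\C$, so that $x^t P_i x = 0$ for each $i$. Write $x = a + \ii b$ with $a, b \in \R^M$. Since each $P_i$ is a real symmetric projection, $P_i^t P_i = P_i^2 = P_i$, and hence $a^t P_i a = \|P_i a\|^2$, $b^t P_i b = \|P_i b\|^2$, and $a^t P_i b = \langle P_i a, P_i b\rangle$. Separating real and imaginary parts in $x^t P_i x = 0$ yields the two real identities
$$\|P_i a\|^2 = \|P_i b\|^2 \quad \text{and} \quad \langle P_i a, P_i b\rangle = 0, \qquad i = 1, \ldots, N.$$

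I would then split on whether $[x]$ is a real projective point. If $x$ is a complex scalar multiple of some nonzero $w \in \R^M$, then after rescaling we may take $b = 0$, so the first identity forces $\|P_i w\|^2 = 0$ and hence $P_i w = 0$ for every $i$. The vectors $P_1 w, \ldots, P_N w$ are all zero, so they fail to span $\R^M$, and Theorem~\ref{Thm.projectionchar} immediately gives non-injectivity. Otherwise, $a$ and $b$ are linearly independent over $\R$. Setting $u = a + b$ and $v = a - b$, both are nonzero real vectors and $u \neq \pm v$. A direct computation using the two identities above gives
$$\|P_i u\|^2 = \|P_i a\|^2 + 2\langle P_i a, P_i b\rangle + \|P_i b\|^2 = 2\|P_i a\|^2 = \|P_i v\|^2$$
for every $i$, exactly the parallelogram-identity cancellation. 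Hence $\sA_\Phi(u) = \sA_\Phi(v)$ with $u \neq \pm v$, so $\sA_\Phi$ is not injective.

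The only subtlety is the need to separate out the case where the complex diagonal point is secretly a complex rescaling of a real point, since otherwise $u$ and $v$ could degenerate; once this is handled the two branches feed directly into Theorem~\ref{Thm.projectionchar} and no additional ingredient is needed. Note also that $\R$-linear independence of $a$ and $b$ is the natural translation of the statement that $[x] \in \P^{M-1}_\C$ is not a real projective point, and it is precisely this independence that guarantees the four non-degeneracy conditions $u \neq 0$, $v \neq 0$, $u \neq v$, $u \neq -v$ needed to conclude.
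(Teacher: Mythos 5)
Your proof is correct and follows essentially the same route as the paper: write the diagonal point as $a + \sqrt{-1}\,b$, separate the real and imaginary parts of $z^t P_i z = 0$, and treat separately the degenerate case where the point is projectively real. The only cosmetic difference is that you exhibit the colliding pair $a+b$, $a-b$ explicitly, whereas the paper concludes that $(a,b)$ is a real point of $Z$ and invokes Theorem \ref{Thm.projectionchar} --- but these amount to the same thing, since the proof of that theorem produces exactly the pair $x \pm y$.
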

\begin{Remark}
Note that Lemma \ref{lem.firstreduction} holds whether or not $M = 2^k +1$.
\end{Remark}
\begin{proof} [Proof of Lemma \ref{lem.firstreduction}]
Let $(z,z)$ be a point of $Z$ on the diagonal. Write $z = x + \sqrt{-1} y$
so the condition  $z^t P_i z =0$ implies that $x^tP_ix - y^tP_iy =0$
and $y^TP_ix = 0$ for all $i$. 
If $x$ and $y$ are both non-zero then $(x,y)$ is a real point of $Z$.
If $x$ or $y$ is 0 then $z$ is either real or pure imaginary. In this case,
either $z$ is a real vector or $\sqrt{-1}z$ is a real vector so 
$(z,z)$ also represents a real point of product $\P^{M-1} \times \P^{M-1}$.
\end{proof}

Now suppose that $Z$ has no real points. Then by Lemma \ref{lem.firstreduction}
$Z$ misses the diagonal Since the equations
$x^tP_i y = 0$ are symmetric in $x$ and $y$, we see that $(x,y) \in Z$
if and only if $(y,x) \in Z$ and $(x,y) \neq (y,x)$. 
Also if $(x,y) \in Z$ is not real
then the complex conjugate $(\overline{x}, \overline{y})$ is also a distinct point of $Z$.
It follows that the degree of the intersection
cycle supported on the variety $Z \subset \P^{N-1} \times \P^{N-1}$
must be divisible by 4. On the other hand by \cite[Examples 13.2, 13.3]{Ful:84}
the degree of the intersection cycle supported on 
$Z$ is $\binom{2M-2}{M-1}$. When $M = 2^k +1$,
Legendre's formula \cite[cf. Proof of Lemma 5.3]{CEHV:15} 
for the highest power of a
prime dividing a factorial shows that $\binom{2M-2}{M-1}$
is not divisible by $4$.

\begin{Remark} If the $P_i$ all have rank one then 
the bilinear equation $y^t P_i x =0$ factors as a product $\langle y, v_i \rangle
 \langle x, v_i\rangle = 0$ where $v_i$ is a unit norm vector generating the line
determined by $v_i$. Since the system of linear equations
$$\langle y, v_1 \rangle= \ldots =\langle y, v_{M-1} \rangle = \langle
x, v_{M} \rangle  = \ldots \langle x, v_{2M-2} \rangle=0$$
has a non-trivial real solution, we obtain another proof that 
the bound $N = 2M-1$ is sharp for rank one projections. 
\end{Remark}

{\bf Acknowledgements.} 
The author is grateful to Pete Casazza for suggesting that he look
at the paper \cite{CCPW:13}.
\bigskip

%\nocite{049.1315cj}
%\bibliography{refs}{}
\bibliographystyle{plain}
\def\cprime{$'$}

\vspace{1cm}

\footnotesize

\noindent Dan Edidin ({\tt edidind@missouri.edu})\\
Department of Mathematics,\\ 
University of Missouri,\\ 
Columbia, Missouri 65211 USA\\  \smallskip

\end{document}